\documentclass{amsart}

% -----------------------------------------------

\newtheorem{theorem}{Theorem}[section]
\newtheorem{lemma}[theorem]{Lemma}
\newtheorem{corollary}[theorem]{Corollary}

\theoremstyle{definition}
\newtheorem{definition}[theorem]{Definition}
\newtheorem{example}[theorem]{Example}

\theoremstyle{remark}
\newtheorem{remark}[theorem]{Remark}

\numberwithin{equation}{section}

% -----------------------------------------------

\newcommand{\R}{\mathbb{R}}
\newcommand{\Z}{\mathbb{Z}}

\newcommand{\T}{\mathbb{T}}

% -----------------------------------------------

\begin{document}

\title{A General Backwards Calculus of Variations via Duality}

% ------------------

\author[A. B. Malinowska]{Agnieszka B. Malinowska}

\address{Faculty of Computer Science,
Bia{\l}ystok University of Technology,
Bia\l ystok, Poland}

\email{abmalina@pb.bialystok.pl}

\thanks{This work was carried out at the University of Aveiro
via a project of the Polish Ministry of Science
and Higher Education ``Wsparcie miedzynarodowej mobilnosci
naukowcow''.}

% ------------------

\author[D. F. M. Torres]{Delfim F. M. Torres}

\address{Department of Mathematics,
University of Aveiro,
3810-193 Aveiro, Portugal}

\email{delfim@ua.pt}

% ------------------

\subjclass[2000]{34N05; 39A12; 49K05}

\date{Submitted to \emph{Optimization Letters} 03-June-2010;
revised 01-July-2010; accepted for publication 08-July-2010.}

\keywords{calculus of variations, composition of functionals,
Euler-Lagrange equations, natural boundary conditions, time scales, duality}

% -----------------------------------------------

\begin{abstract}
We prove Euler-Lagrange and natural boundary
necessary optimality conditions
for problems of the calculus of variations
which are given by a composition of nabla integrals
on an arbitrary time scale.
As an application, we get optimality conditions
for the product and the quotient
of nabla variational functionals.
\end{abstract}

\maketitle

%---------------------------------------------------------------------

\section{Introduction}

A time scale $\T$ is any nonempty closed subset of $\R$.
The theory of dynamic Euler-Lagrange equations
is a recent field attracting considerable attention
--- see \cite{ZbigDel,b7,b:rui:delfim,bhn:Gus,%
Rui:Del:HO,Mal:Tor:09,Mal:Tor:Wei,MyID:183,Nat}
and references therein. For applications of the
calculus of variations on time scales to economics see
\cite{Almeida:T,Atici:Uysal:08,Basia:Naty:delfim}.
In particular, one obtains the well-known
continuous \cite{P:R:K:10}, discrete \cite{KP},
and quantum calculus of variations \cite{Bang:q-calc}
by choosing $\T=\R$, $\T=\Z$, and
$q^{\mathbb{N}_0}:=\{q^k | k \in \mathbb{N}_0\}$, $q>1$,
respectively.

This paper is dedicated to the study of general (non-classical)
problems of calculus of variations on an arbitrary time scale
$\mathbb{T}$. As a particular case, when $\mathbb{T} =
\mathbb{R}$ one gets the generalized calculus of variations
\cite{CLP} with functionals of the form
\begin{equation*}
H \left(\int_{a}^{b}f(t,x(t),x'(t))dt \right)
\end{equation*}
where $f$ has $n$ components and $H$ has $n$ independent variables.
Problems of calculus of variations of this form appear in practical
applications of economics but cannot be solved using the classical theory
(see \cite{CLP} and the references therein).

In the literature of the calculus of variations on time scales,
the problems are formulated and the results are proved in terms
of the delta or the nabla calculus. Here we use a different
approach. We make use of the duality technique
recently introduced by Caputo \cite{Caputo},
obtaining the results for the nabla variational problems directly from
the results on the delta calculus of variations.
Such duality theory has shown recently to be very useful in control theory
\cite{MyID:177}.

In contrast with \cite{Basia:Composition},
we adopt here a backward perspective, which has proved useful,
and sometimes more natural and preferable,
with respect to applications in economics
\cite{Almeida:T,Atici:2006,Atici:Uysal:08}.
The advantage of the backward approach here promoted becomes
evident when one considers that the
time scales analysis has important
implications for numerical analysts, who often prefer
backward differences rather than forward differences to
handle their computations. This is due to practical implementation reasons
and better stability properties of implicit discretizations
\cite{MR2436490,MyID:177}.

The paper is organized as follows. In Section~\ref{sec:prm} some
preliminaries on the recent duality theory on time scales are presented.
Our results are then given in Section~\ref{sec:Euler}: we
formulate the general (non-classical)
problem of calculus of variations \eqref{vp} on an arbitrary time
scale; we obtain a general formula for the Euler-Lagrange equations
and natural boundary conditions (Theorem~\ref{thm:mr});
and interesting corollaries are presented for the product
(Corollary~\ref{cproduct}) and the quotient (Corollary~\ref{cquotient}).
Finally, in Section~\ref{sec:ex} we illustrate the results
of the paper with two examples.

%---------------------------------------------------------------------

\section{Preliminaries}
\label{sec:prm}

In this section we review some facts from \cite{Caputo} which we
need for the proof of our main result (Theorem~\ref{thm:mr}).
We begin by briefly recalling the basic definitions,
notations, and facts concerning the delta and nabla differential
calculus on time scales, which can be found in the monographs
\cite{BP,book:ts1,1st:book:ts}.

A {\it time scale} $\mathbb{T}$ is an arbitrary nonempty closed subset
of $\mathbb{R}$. For each time scale $\mathbb{T}$ the following operators are used:
the {\it forward jump operator} $\sigma:\mathbb{T} \rightarrow \mathbb{T}$,
defined by $\sigma(t):=\inf\{s \in \mathbb{T}:s>t\}$ for $t\neq\sup \mathbb{T}$
and $\sigma(\sup\mathbb{T})=\sup\mathbb{T}$ if $\sup\mathbb{T}<+\infty$;
the {\it backward jump operator} $\rho:\mathbb{T} \rightarrow \mathbb{T}$,
defined by $\rho(t):=\sup\{s \in \mathbb{T}:s<t\}$ for $t\neq\inf \mathbb{T}$
and $\rho(\inf\mathbb{T})=\inf\mathbb{T}$ if $\inf\mathbb{T}>-\infty$;
the {\it forward graininess function} $\mu:\mathbb{T} \rightarrow [0,\infty)$,
defined by $\mu(t):=\sigma(t)-t$; and the \emph{backward graininess function}
$\nu:\mathbb{T}\rightarrow[0,\infty)$, defined by $\nu(t)=t - \rho(t)$.
A point $t\in\mathbb{T}$ is called \emph{right-dense},
\emph{right-scattered}, \emph{left-dense} or \emph{left-scattered}
if $\sigma(t)=t$, $\sigma(t)>t$, $\rho(t)=t$,
and $\rho(t)<t$, respectively. We say that $t$ is \emph{isolated}
if $\rho(t)<t<\sigma(t)$, that $t$ is \emph{dense} if $\rho(t)=t=\sigma(t)$.
If $\sup \mathbb{T}$ is finite and left-scattered, we define
$\mathbb{T}^\kappa := \mathbb{T}\setminus \{\sup\mathbb{T}\}$,
otherwise $\mathbb{T}^\kappa :=\mathbb{T}$.
We say that a function
$f:\mathbb{T}\rightarrow\mathbb{R}$ is \emph{delta differentiable}
at $t\in\mathbb{T}^{\kappa}$ if there exists a number
$f^{\Delta}(t)$ such that for all $\varepsilon>0$ there is a
neighborhood $U$ of $t$ such that
$|f(\sigma(t))-f(s)-f^{\Delta}(t)(\sigma(t)-s)|
\leq\varepsilon|\sigma(t)-s|$ for all $s\in U$. We call
$f^{\Delta}(t)$ the \emph{delta derivative} of $f$ at $t$ and $f$ is
said \emph{delta differentiable} on $\mathbb{T}^{\kappa}$ provided
$f^{\Delta}(t)$ exists for all $t\in\mathbb{T}^{\kappa}$.
For $f:\mathbb{T} \rightarrow X$, where $X$ is an arbitrary set,
we define $f^\sigma:=f\circ\sigma$.
A function $f:\mathbb{T}\rightarrow\mathbb{R}$ is called
\emph{rd-continuous} if it is continuous at right-dense points
in $\mathbb{T}$ and its left-sided limits exist (finite)
at left-dense points in $\mathbb{T}$. We denote the
set of all rd-continuous functions by
$C^0_{\textrm{rd}} = C_{\textrm{rd}}
= C_{\textrm{rd}}(\mathbb{T})
= C_{\textrm{rd}}(\mathbb{T}; \mathbb{R})$.
The set of functions $f : \mathbb{T} \rightarrow \mathbb{R}$
that are delta differentiable and whose delta derivative
is rd-continuous is denoted by $C_{\textrm{rd}}^1
= C_{\textrm{rd}}^1(\mathbb{T})
= C_{\textrm{rd}}^1(\mathbb{T}; \mathbb{R})$.

In order to introduce the definition of nabla derivative, one defines
a new set $\mathbb{T}_\kappa$ which is derived from $\mathbb{T}$ as
follows: if  $\mathbb{T}$ has a right-scattered minimum $m$, then
$\mathbb{T}_\kappa=\mathbb{T}\setminus\{m\}$; otherwise,
$\mathbb{T}_\kappa= \mathbb{T}$. In order to simplify expressions,
and similarly as done with composition with $\sigma$, we define
$f^{\rho}(t) := f(\rho(t))$.
We say that a function $f:\mathbb{T}\rightarrow\mathbb{R}$ is
\emph{nabla differentiable} at $t\in\mathbb{T}_\kappa$ if there is a
number $f^{\nabla}(t)$ such that for all $\varepsilon>0$ there
exists a neighborhood $U$ of $t$ such that
$|f^\rho(t)-f(s)-f^{\nabla}(t)(\rho(t)-s)|
\leq\varepsilon|\rho(t)-s|$ for all $s\in U$. We call
$f^{\nabla}(t)$ the \emph{nabla derivative} of $f$ at $t$. Moreover,
we say that $f$ is \emph{nabla differentiable} on $\mathbb{T}$
provided $f^{\nabla}(t)$ exists for all $t \in \mathbb{T}_\kappa$.
Let $\mathbb{T}$ be a time scale,
$f:\mathbb{T}\rightarrow\mathbb{R}$. We say that function $f$ is
\emph{ld-continuous} if it is continuous at left-dense points
and its right-sided limits exist (finite) at all right-dense points.
The set of all ld-continuous functions
$f:\mathbb{T}\rightarrow\mathbb{R}$ is denoted by
$C^0_{\textrm{ld}} = C_{\textrm{ld}}
= C_{\textrm{ld}}(\mathbb{T})
= C_{\textrm{ld}}(\mathbb{T}, \mathbb{R})$,
and the set of all nabla differentiable functions
with ld-continuous derivative by
$C_{\textrm{ld}}^1 = C_{\textrm{ld}}^1(\mathbb{T}, \mathbb{R})$.
For the delta differential/integral calculus
we refer the reader to \cite{BP};
for the nabla differential/integral calculus
we refer the reader to \cite[Chap.~3]{book:ts1}.

Given a time scale $\T$ we define the dual time scale $\T^*$ by
$\T^*:=\{s\in \R|-s\in \T\}$.
If $\rho$ and $\sigma$ denote the backward jump
operator and the forward jump operator associated to $\T$, then we
denote by $\hat{\rho}$ and $\hat{\sigma}$ the jump operators
associated with $\T^*$. If $\nu$ and $\mu$ denote respectively the
backward graininess function and the forward graininess function
associated to $\T$, then we denote by $\hat{\nu}$ and $\hat{\mu}$
the graininess functions associated to $\T^*$.

Given a function $f:\mathbb{T}\rightarrow\mathbb{R}$ we define the
dual function $f^*:\mathbb{T^*}\rightarrow\mathbb{R}$ by
$f^*(s):=f(-s)$ for all $s\in \T^*$.
For a given quintuple $(\T,\rho,\sigma,\nu,\mu)$ its dual will be
$(\T^*,\hat{\rho},\hat{\sigma},\hat{\nu}, \hat{\mu})$ where
$\hat{\rho}$, $\hat{\sigma}$, $\hat{\nu}$, $\hat{\mu}$ are given as follows:
$\hat{\rho}(s)=-\sigma(-s)$, $\hat{\sigma}(s)=-\rho(-s)$,
$\hat{\nu}(s)=\mu^*(s)$, and $\hat{\mu}(s)=\nu^*(s)$.

Let $f:\mathbb{T}\rightarrow\mathbb{R}$. The following holds:
\begin{itemize}
\item[(i)] If $f$ is delta (resp. nabla) differentiable at $t_0\in
\T^\kappa$ (resp. $t_0\in \T_\kappa$), then $f^*$ is nabla (resp.
delta) differentiable at $-t_0\in (\T^*)_\kappa$ (resp. $-t_0\in
(\T^*)^\kappa$) and $f^\Delta(t_0)=-(f^*)^{\hat{\nabla}}(-t_0)$
(resp. $f^\nabla(t_0)=-(f^*)^{\hat{\Delta}}(-t_0)$, where $\Delta$
and $\nabla$ denote the delta and nabla derivatives for the time scale $\T$;
and $\hat{\Delta}$ and $\hat{\nabla}$ denote the delta and nabla derivatives
for the time scale $\T^*$.
\item[(ii)] $f$ belongs to $C_{rd}^1$ (resp. $C_{ld}^1$) if and only if $f^*$ belongs to $C_{ld}^1$ (resp.
$C_{rd}^1$).
\end{itemize}

Let $a,b\in\mathbb{T}$ with $a\leq b$. We define the closed interval
$[a,b]$ in $\mathbb{T}$ by
$[a,b]:=\{t \in \mathbb{T}: a \leq t \leq b\}$.
Along this work we always assume that $[a,b]$ denote an interval
in a given time scale $\mathbb{T}$.

Let $f:[a,b]\rightarrow\mathbb{R}$ be a rd-continuous (resp.
ld-continuous). Then,
$$
\int_a^bf(t)\Delta t
=\int_{-b}^{-a}f^*(s)\hat{\nabla}s
\quad \left(\text{resp. }
\int_a^bf(t)\nabla t=\int_{-b}^{-a}f^*(s)\hat{\Delta}s\right) \, .
$$

\begin{lemma}[\cite{Caputo}]
\label{lemma} For a given Lagrangian $L : [a,b]_\kappa\times
\R\times\R\rightarrow \R$ the following identity holds:
\begin{equation*}
\int_a^b L\left(t,x^{\rho}(t),x^{\nabla}(t)\right)\nabla t
=\int_{-b}^{-a} L^*\left(s,(x^*)^{\hat{\sigma}}(s),(x^*)^{\hat{\Delta}}(s)\right)\hat{\Delta} s
\end{equation*}
for all functions $x\in C^1_{ld}([a,b])$, where the dual Lagrangian
$L^*:[-b,-a]^\kappa \times \R\times\R\rightarrow \R$ is defined by
$L^*(s,x,v)=L(-s,x,-v)$ for all $(s,x,v)\in[-b,-a]^\kappa \times
\R\times\R$.
\end{lemma}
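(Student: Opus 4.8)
The plan is to reduce the nabla-to-delta transformation of the integral to two facts already collected in the preliminaries: the change-of-variables formula for the nabla integral under dualization, $\int_a^b g(t)\nabla t=\int_{-b}^{-a}g^*(s)\hat{\Delta}s$, and the pointwise relation between the nabla derivative on $\T$ and the delta derivative on $\T^*$, namely $x^\nabla(t_0)=-(x^*)^{\hat{\Delta}}(-t_0)$ from item~(i). The only genuinely new ingredient is to track the three \emph{arguments} of the Lagrangian through the substitution $t=-s$ and to verify that the integrand $L(t,x^\rho(t),x^\nabla(t))$, viewed as a function of $t$, dualizes exactly to $L^*(s,(x^*)^{\hat\sigma}(s),(x^*)^{\hat\Delta}(s))$ as a function of $s$.

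First I would define $g:[a,b]\to\R$ by $g(t):=L(t,x^\rho(t),x^\nabla(t))$. Since $x\in C^1_{ld}([a,b])$, item~(ii) gives $x^*\in C^1_{rd}([-b,-a])$, and the continuity hypotheses on $L$ make $g$ ld-continuous, so the nabla integral of $g$ exists and the change-of-variables formula applies: $\int_a^b g(t)\nabla t=\int_{-b}^{-a}g^*(s)\hat\Delta s$, where $g^*(s)=g(-s)=L(-s,x^\rho(-s),x^\nabla(-s))$. It then remains to identify $g^*(s)$ with $L^*(s,(x^*)^{\hat\sigma}(s),(x^*)^{\hat\Delta}(s))$ pointwise in $s\in[-b,-a]^\kappa$.

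For this identification I would compute each slot separately. The time slot is immediate: $-s$ in $g^*$ versus $s$ in $L^*$, matched precisely by the definition $L^*(s,x,v)=L(-s,x,-v)$. For the state slot, I would use the duality of the jump operators: $\hat\sigma(s)=-\rho(-s)$, hence $(x^*)^{\hat\sigma}(s)=x^*(\hat\sigma(s))=x^*(-\rho(-s))=x(\rho(-s))=x^\rho(-s)$, which is exactly the second argument appearing in $g^*(s)$. For the velocity slot, I would apply item~(i) with $t_0=-s$: it yields $x^\nabla(-s)=-(x^*)^{\hat\Delta}(-(-s))=-(x^*)^{\hat\Delta}(s)$, so the third argument of $g^*(s)$ is $x^\nabla(-s)=-(x^*)^{\hat\Delta}(s)$. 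Substituting these into $g^*(s)=L(-s,\,x^\rho(-s),\,x^\nabla(-s))=L(-s,\,(x^*)^{\hat\sigma}(s),\,-(x^*)^{\hat\Delta}(s))=L^*(s,(x^*)^{\hat\sigma}(s),(x^*)^{\hat\Delta}(s))$ finishes the proof.

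The main obstacle, such as it is, is bookkeeping rather than mathematics: one must be careful that the sign flips in $\hat\sigma$, $\hat\rho$, and in the derivative relation are applied consistently, and that the domain passes correctly from $[a,b]_\kappa$ to $[-b,-a]^\kappa$ (so that $L^*$ is evaluated where it is defined, matching the $\kappa$/$^\kappa$ swap recorded in the preliminaries). I would also note in passing that the velocity-slot step requires $x$ to be nabla differentiable at $-s$, which is guaranteed for $s\in[-b,-a]^\kappa$ precisely because $x\in C^1_{ld}([a,b])$; no extra hypotheses beyond those stated are needed.
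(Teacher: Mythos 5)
Your argument is correct: defining $g(t)=L(t,x^{\rho}(t),x^{\nabla}(t))$, applying the integral duality formula $\int_a^b g(t)\nabla t=\int_{-b}^{-a}g^*(s)\hat{\Delta}s$, and then matching the three argument slots via $\hat{\sigma}(s)=-\rho(-s)$ and $x^{\nabla}(-s)=-(x^*)^{\hat{\Delta}}(s)$ is exactly the intended route, and your bookkeeping of the signs and of the $\kappa$/$^{\kappa}$ correspondence is accurate. The paper itself gives no proof of this lemma --- it is quoted from \cite{Caputo} --- but your derivation uses precisely the duality facts assembled in the paper's preliminaries and is a faithful reconstruction of the standard argument.
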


A note on the use of the term \emph{duality} is in order.
Duality theory is a standard concept in optimization theory
that one learns in the earliest of courses beginning with
its applications in linear programming and then in nonlinear programming. This
theory has been extended to many settings, including optimal control theory and
the calculus of variations. Typically, when one considers minimizing a function
$f(\cdot)$ over a set $C$, the dual problem is one of maximizing a related function.
In the present context, the word ``duality'' has a different meaning.
The term duality does not refer here to the classical concept, well known to
researchers in optimization, but to the one recently introduced in
\cite{Caputo} (see also \cite{MyID:177}).

%----------------------------------------------------------------

\section{Main Results}
\label{sec:Euler}

Throughout we consider $A,B\in \mathbb{T}$ with $A<B$. Now let
$[a,b]$ with $a,b\in \mathbb{T}$, $b<B$ and $a>A$,
be a subinterval of $[A,B]$.
The general (non-classical) problem of the calculus of variations on
time scales under our consideration consists of extremizing
(\textrm{i.e.}, minimizing or maximizing)
\begin{equation}
\label{vp}
\begin{gathered}
 \mathcal{L}[x]=H\left(\int_{a}^{b}f_{1}(t,x^{\rho}(t),x^{\nabla}(t))\nabla
     t,\ldots, \int_{a}^{b}f_{n}(t,x^{\rho}(t),x^{\nabla}(t))\nabla
     t\right)\\
     (x(a)=x_{a}) \quad (x(b)=x_{b})
 \end{gathered}
\end{equation}
over all $x\in C_{ld}^{1}$. Using parentheses around the end-point
conditions means that these conditions may or may not be present. We
assume that:
\begin{itemize}

\item[(i)] the function $H:\mathbb{R}^{n}\rightarrow \mathbb{R}$ has continuous
partial derivatives with respect to its arguments and we denote them
by $H'_{i}$, $i=1,\ldots,n$;

\item[(ii)] functions $(t,y,v)\rightarrow f_{i}(t,y,v)$ from $[a,b]\times \mathbb{R}^{2}$ to
$\mathbb{R}$, $i=1,\ldots,n$, have partial continuous derivatives
with respect to $y,v$ for all $t\in[a,b]$ and we denote them by
$f_{iy}$, $f_{iv}$;

\item [(iii)] $f_{i}$, $i=1,\ldots,n$, and their partial
derivatives are ld-continuous in $t$ for all $x\in C_{ld}^{1}$.

\end{itemize}

A function $x\in C_{ld}^{1}$ is said to be an admissible function
provided that it satisfies the end-points conditions (if any is
given). The following norm in $C_{ld}^{1}$ is considered:
\begin{equation*}
\|x\|_{1}=\sup_{t\in[a,b]}|x^{\rho}(t)|+\sup_{t\in[a,b]}|x^{\nabla}(t)|.
\end{equation*}

\begin{definition}
An admissible function $\tilde{x}$ is said to be a \emph{weak local
minimizer} (resp. \emph{weak local maximizer}) for \eqref{vp}
if there exists $\delta
>0$ such that $\mathcal{L}[\tilde{x}]\leq \mathcal{L}[x]$ (resp. $\mathcal{L}[\tilde{x}] \geq \mathcal{L}[x]$)
for all admissible $x$ with $\|x-\tilde{x}\|_{1}<\delta$.
\end{definition}

The next theorem gives necessary optimality conditions
for problem \eqref{vp}.

\begin{theorem}
\label{thm:mr}
If $\tilde{x}$ is a weak local solution of problem \eqref{vp},
then the Euler-Lagrange equation
\begin{equation}
\label{Euler}
\sum_{i=1}^{n}H'_{i}(\mathcal{F}_{1}[\tilde{x}],\ldots,
\mathcal{F}_{n}[\tilde{x}])\left(f_{iv}^{\nabla}(t,\tilde{x}^{\rho}(t),\tilde{x}^{\nabla}(t))-
f_{iy}(t,\tilde{x}^{\rho}(t),\tilde{x}^{\nabla}(t))\right)=0
\end{equation}
holds for all $t \in [a,b]_\kappa$, where
$\mathcal{F}_{i}[\tilde{x}]=\int_{a}^{b}f_{i}(t,\tilde{x}^{\rho}(t),\tilde{x}^{\nabla}(t))\nabla
t$, $i=1,\ldots,n$. Moreover, if $x(a)$ is not specified, then
\begin{multline}
\label{nat:l}
\sum_{i=1}^{n}H'_{i}(\mathcal{F}_{1}[\tilde{x}],
\ldots,\mathcal{F}_{n}[\tilde{x}])\Biggl(\Bigl.\int_{a}^{\sigma
(a)}f_{iy}(t,\tilde{x}^{\rho}(t),\tilde{x}^{\nabla}(t))\nabla t\Bigr.\\
-f_{iv}(\sigma(a),\tilde{x}^{\rho}(\sigma
(a)),\tilde{x}^{\nabla}(\sigma(a)))\Biggr)=0 \, ;
\end{multline}
if $x(b)$ is not specified, then
\begin{equation*}
\sum_{i=1}^{n}H'_{i}(\mathcal{F}_{1}[\tilde{x}],\ldots,
\mathcal{F}_{n}[\tilde{x}])f_{iv}(b,\tilde{x}^{\rho}(b),\tilde{x}^{\nabla}(b))=0 \, .
\end{equation*}
\end{theorem}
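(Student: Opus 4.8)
The plan is to transport problem \eqref{vp} to the dual time scale $\mathbb{T}^{*}$, where, by Lemma~\ref{lemma}, it turns into a forward (delta) problem of exactly the same composition type, then to apply there the delta-calculus counterpart of Theorem~\ref{thm:mr} established in \cite{Basia:Composition}, and finally to carry the resulting optimality conditions back to $\mathbb{T}$ by means of the duality dictionary of Section~\ref{sec:prm}.

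Concretely, I would first set $\tilde{y}:=\tilde{x}^{*}$, which belongs to $C_{rd}^{1}([-b,-a])$ by item~(ii). Applying Lemma~\ref{lemma} to each component of \eqref{vp} gives
\[
\mathcal{F}_{i}[x]=\int_{-b}^{-a}f_{i}^{*}\!\left(s,(x^{*})^{\hat{\sigma}}(s),(x^{*})^{\hat{\Delta}}(s)\right)\hat{\Delta}s=:\overline{\mathcal{F}}_{i}[x^{*}],\qquad f_{i}^{*}(s,u,v)=f_{i}(-s,u,-v),
\]
so that $\mathcal{L}[x]=H\bigl(\overline{\mathcal{F}}_{1}[x^{*}],\dots,\overline{\mathcal{F}}_{n}[x^{*}]\bigr)=:\overline{\mathcal{L}}[x^{*}]$. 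Using $(x^{*})^{\hat{\sigma}}=(x^{\rho})^{*}$, $(x^{*})^{\hat{\Delta}}=-(x^{\nabla})^{*}$, and $\sup|g^{*}|=\sup|g|$, the $C_{rd}^{1}$-norm of $x^{*}-\tilde{x}^{*}$ over $[-b,-a]$ equals $\|x-\tilde{x}\|_{1}$, and $x\mapsto x^{*}$ is a bijection between admissible functions of \eqref{vp} and admissible functions of $\overline{\mathcal{L}}$ (the constraints $x(a)=x_{a}$, $x(b)=x_{b}$ becoming $y(-a)=x_{a}$, $y(-b)=x_{b}$); hence $\tilde{y}$ is a weak local solution of the delta problem of extremizing
\[
\overline{\mathcal{L}}[y]=H\!\left(\int_{-b}^{-a}f_{1}^{*}(s,y^{\hat{\sigma}}(s),y^{\hat{\Delta}}(s))\,\hat{\Delta}s,\ \dots,\ \int_{-b}^{-a}f_{n}^{*}(s,y^{\hat{\sigma}}(s),y^{\hat{\Delta}}(s))\,\hat{\Delta}s\right)
\]
over $C_{rd}^{1}([-b,-a])$, and hypotheses (i)--(iii) pass to the $f_{i}^{*}$. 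At this point I would invoke the forward analogue of the present theorem from \cite{Basia:Composition}: $\tilde{y}$ satisfies the delta Euler--Lagrange equation $\sum_{i}H'_{i}(\overline{\mathcal{F}}_{1}[\tilde{y}],\dots,\overline{\mathcal{F}}_{n}[\tilde{y}])\bigl((f^{*}_{iv})^{\hat{\Delta}}-f^{*}_{iy}\bigr)=0$ on $[-b,-a]^{\kappa}$ (partials evaluated along $\tilde{y}$), together with the associated delta natural boundary conditions at the endpoint $-b$, resp.\ $-a$, whenever $y(-b)$, resp.\ $y(-a)$, is free.

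It then remains to undo the dualization. Writing $g_{i}(t):=f_{iv}(t,\tilde{x}^{\rho}(t),\tilde{x}^{\nabla}(t))$ and $h_{i}(t):=f_{iy}(t,\tilde{x}^{\rho}(t),\tilde{x}^{\nabla}(t))$, the relations $f^{*}_{iv}(s,u,v)=-f_{iv}(-s,u,-v)$, $f^{*}_{iy}(s,u,v)=f_{iy}(-s,u,-v)$ together with $(x^{*})^{\hat{\sigma}}=(x^{\rho})^{*}$ and $(x^{*})^{\hat{\Delta}}=-(x^{\nabla})^{*}$ show that $f^{*}_{iv}$ and $f^{*}_{iy}$ evaluated along $\tilde{y}$ equal $-(g_{i})^{*}$ and $(h_{i})^{*}$ respectively, while item~(i) gives $((g_{i})^{*})^{\hat{\Delta}}=-(g_{i}^{\nabla})^{*}$. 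Since each $H'_{i}(\overline{\mathcal{F}}_{1}[\tilde{y}],\dots)=H'_{i}(\mathcal{F}_{1}[\tilde{x}],\dots)$ is a constant, it commutes with $(\cdot)^{\hat{\Delta}}$ and with $(\cdot)^{*}$, so the delta Euler--Lagrange equation becomes $\bigl(\sum_{i}H'_{i}(\mathcal{F}_{1}[\tilde{x}],\dots)(g_{i}^{\nabla}-h_{i})\bigr)^{*}=0$ on $[-b,-a]^{\kappa}$; removing $(\cdot)^{*}$ and using $([a,b]_{\kappa})^{*}=[-b,-a]^{\kappa}$ yields \eqref{Euler} on $[a,b]_{\kappa}$. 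Handling the boundary terms the same way, the delta natural boundary condition at the left endpoint $-b$ (active when $x(b)$ is free) translates directly into $\sum_{i}H'_{i}(\mathcal{F}_{1}[\tilde{x}],\dots)f_{iv}(b,\tilde{x}^{\rho}(b),\tilde{x}^{\nabla}(b))=0$, whereas the delta natural boundary condition at the right endpoint $-a$ (active when $x(a)$ is free) carries a graininess/integral correction involving $\hat{\rho}(-a)=-\sigma(a)$; converting the $\hat{\Delta}$-integral occurring there into a $\nabla$-integral via the integral duality formula of Section~\ref{sec:prm} and using that $f^{*}_{iy}$ along $\tilde{y}$ equals $(h_{i})^{*}$ produces exactly \eqref{nat:l}.

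I expect the interior Euler--Lagrange equation to be essentially routine once the duality dictionary is in place. The main obstacle will be the boundary conditions: one must quote the delta result of \cite{Basia:Composition} in precisely the form whose dualization reproduces the graininess term $\int_{a}^{\sigma(a)}f_{iy}\nabla t$ and the evaluation point $\sigma(a)$ appearing in \eqref{nat:l} (the left-scattered-endpoint case being the delicate one), and one has to verify carefully that the norm and the end-point constraints correspond under $x\mapsto x^{*}$, so that the property of being a weak local solution is genuinely preserved by the transport.
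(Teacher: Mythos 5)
Your proposal is correct and follows essentially the same route as the paper: dualize to the delta problem on $[-b,-a]$ via Lemma~\ref{lemma}, invoke Theorem~3.2 of \cite{Basia:Composition}, and translate the Euler--Lagrange equation and natural boundary conditions back using $\hat{\rho}(-a)=-\sigma(a)$ and the integral duality formula. The only (cosmetic) difference is that you derive the correspondence $(f_{iv}^{*})^{\hat{\Delta}}-f_{iy}^{*}=(f_{iv}^{\nabla}-f_{iy})^{*}$ directly from the duality dictionary, where the paper simply cites \cite[Theorem~6.10]{Caputo}; you are also somewhat more careful than the paper in checking that the norm and admissibility constraints are preserved under $x\mapsto x^{*}$.
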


\begin{proof}
Since $\tilde{x}$ is a weak local extremizer for \eqref{vp}, it follows
by Lemma~\ref{lemma} that $\tilde{x}^*$ is a weak local extremizer for
the dual problem
\begin{equation*}
\begin{gathered}
 \mathcal{L^*}[x^*]=H\left(\int_{-b}^{-a}f_{1}^*(s,(x^*)^{\hat{\sigma}}(s),(x^*)^{\hat{\Delta}}(s)){\hat{\Delta}}
     s,\right.\\
\qquad \qquad \qquad \qquad \qquad \qquad
\ldots, \left.\int_{-b}^{-a}f_{n}^*(s,(x^*)^{\hat{\sigma}}(s),(x^*)^{\hat{\Delta}}(s)){\hat{\Delta}}
     s\right),\\
     (x^*(-a)=x_{a}) \quad (x^*(-b)=x_{b})
 \end{gathered}
\end{equation*}
over all $x^*\in C_{rd}^{1}([-b,-a])$. Applying
\cite[Theorem~3.2]{Basia:Composition}
we conclude that $\tilde{x}^*$
satisfies the following conditions:
\begin{multline*}
\sum_{i=1}^{n}H'_{i}(\mathcal{F}_{1}^*[\tilde{x}^*],\ldots,
\mathcal{F}_{n}^*[\tilde{x}^*])\left((f_{iv}^*)^{\hat{\Delta}}(s,(\tilde{x}^*)^{\hat{\sigma}}(s),
(\tilde{x}^*)^{\hat{\Delta}}(s))\right.\\
-\left.(f_{iy}^*)(s,(\tilde{x}^*)^{\hat{\sigma}}(s),(\tilde{x}^*)^{\hat{\Delta}}(s))\right)=0
\end{multline*}
for all $s \in [-b,-a]^\kappa$, where
$\mathcal{F}_{i}^*[\tilde{x}^*]=\int_{-b}^{-a}f_{i}^*(s,(\tilde{x}^*)^{\hat{\sigma}}(s),(\tilde{x}^*)^{\hat{\Delta}}(s))\hat{\Delta}
s$, $i=1,\ldots,n$. Moreover, if $x^*(-b)$ is not specified, then
\begin{equation}
\label{d:nat:l}
\sum_{i=1}^{n}H'_{i}(\mathcal{F}_{1}^*[\tilde{x}^*],\ldots,\mathcal{F}_{n}^*[\tilde{x}^*])f_{iv}^*(-b,
(\tilde{x}^*)^{\hat{\sigma}}(-b),(\tilde{x}^*)^{\hat{\Delta}}(-b))=0;
\end{equation}
if $x^*(-a)$ is not specified, then
\begin{multline}
\label{d:nat:r}
\sum_{i=1}^{n}H'_{i}(\mathcal{F}_{1}^*[\tilde{x}^*],
\ldots,\mathcal{F}_{n}^*[\tilde{x}^*])\Biggl(f_{iv}^*(\hat{\rho}(-a),(\tilde{x}^*)^{\hat{\sigma}}(\hat{\rho}
(-a)),(\tilde{x}^*)^{\hat{\Delta}}(\hat{\rho}(-a)))\Bigr.\\
+\Bigl.\int_{\hat{\rho}(-a)}^{-a}f_{iy}^*(s,(\tilde{x}^*)^{\hat{\sigma}}(s),(\tilde{x}^*)^{\hat{\Delta}}(s))\hat{\Delta}
s\Biggr)=0.
\end{multline}
By Lemma~\ref{lemma},
\begin{equation*}
\begin{split}
\mathcal{F}_{i}^*[\tilde{x}^*]&=\int_{-b}^{-a}f_{i}^*(s,(\tilde{x}^*)^{\hat{\sigma}}(s),(\tilde{x}^*)^{\hat{\Delta}}(s))\hat{\Delta} s\\
&=\int_{a}^{b}f_{i}(t,\tilde{x}^{\rho}(t),\tilde{x}^{\nabla}(t))\nabla t \\
&=\mathcal{F}_{i}[\tilde{x}],
\end{split}
\end{equation*}
$i=1,\ldots,n$, and
$t\in[a,b]_\kappa$. From the
duality of the Euler-Lagrange equations \cite[Theorem~6.10]{Caputo}
it follows that
\begin{multline*}
(f_{iv}^*)^{\hat{\Delta}}\left(s,(\tilde{x}^*)^{\hat{\sigma}}(s),(\tilde{x}^*)^{\hat{\Delta}}(s)\right)
-f_{iy}^*\left(s,(\tilde{x}^*)^{\hat{\sigma}}(s),(\tilde{x}^*)^{\hat{\Delta}}(s)\right)\\
= f_{iv}^{\nabla}\left(t,\tilde{x}^{\rho}(t),\tilde{x}^{\nabla}(t)\right)
- f_{iy}\left(t,\tilde{x}^{\rho}(t),\tilde{x}^{\nabla}(t)\right),
\end{multline*}
$i=1,\ldots,n$. This establishes relation \eqref{Euler}. Now assume
that $x(b)$ is not specified. By duality,
condition \eqref{d:nat:l} holds. Since
$$f_{iv}^*(-b,(\tilde{x}^*)^{\hat{\sigma}}(-b),(\tilde{x}^*)^{\hat{\Delta}}(-b))
=-f_{iv}(b,\tilde{x}^{\rho}(b),\tilde{x}^{\nabla}(b)),$$
$i=1,\ldots,n$, we have
$$\sum_{i=1}^{n}H'_{i}(\mathcal{F}_{1}[\tilde{x}],\ldots,
\mathcal{F}_{n}[\tilde{x}])f_{iv}(b,\tilde{x}^{\rho}(b),\tilde{x}^{\nabla}(b))=0.$$
If $x(a)$ is not specified, then by duality condition
\eqref{d:nat:r} holds. Observe that
$$f_{iv}^*(\hat{\rho}(-a),(\tilde{x}^*)^{\hat{\sigma}}(\hat{\rho}
(-a)),(\tilde{x}^*)^{\hat{\Delta}}(\hat{\rho}(-a))=-f_{iv}(\sigma(a),\tilde{x}^{\rho}(\sigma
(a)),\tilde{x}^{\nabla}(\sigma(a)),$$ and
$$\int_{\hat{\rho}(-a)}^{-a}f_{iy}^*(s,(\tilde{x}^*)^{\hat{\sigma}}(s),(\tilde{x}^*)^{\hat{\Delta}}(s))\hat{\Delta}
s=\int_a^{\sigma
(a)}f_{iy}\left(t,\tilde{x}^{\rho}(t),\tilde{x}^{\nabla}(t)\right)\nabla t.$$
From the above it follows \eqref{nat:l}.
\end{proof}

Choosing $\mathbb{T}=\mathbb{R}$ in Theorem~\ref{thm:mr} we
immediately obtain the following result:

\begin{corollary}(Th.~3.1 and Eq.~(4.1) in \cite{CLP})
If $\tilde{x}$ is a solution of the problem
\begin{equation*}
\begin{gathered}
\mathcal{L}[x]=H\left(\int_{a}^{b}f_{1}(t,x(t),x'(t))dt,\ldots,
\int_{a}^{b}f_{n}(t,x(t),x'(t))dt\right) \longrightarrow \textrm{extr}\\
(x(a)=x_{a}) \quad (x(b)=x_{b})
\end{gathered}
\end{equation*}
then the Euler-Lagrange equation
\begin{equation*}
\sum_{i=1}^{n}H'_{i}(\mathcal{F}_{1}[\tilde{x}],\ldots,\mathcal{F}_{n}[\tilde{x}])\left(f_{iy}(t,\tilde{x}(t),\tilde{x}'(t))-
\frac{d}{dx}f_{iv}(t,\tilde{x}(t),\tilde{x}'(t))\right)=0
\end{equation*}
holds for all $t \in [a,b]$, where
$\mathcal{F}_{i}[\tilde{x}]=\int_{a}^{b}f_{i}(t,\tilde{x}(t),\tilde{x}'(t))dt$,
$i=1,\ldots,n$. Moreover, if $x(a)$ is not specified, then
\begin{equation*}
\sum_{i=1}^{n}H'_{i}(\mathcal{F}_{1}[\tilde{x}],\ldots,\mathcal{F}_{n}[\tilde{x}])f_{iv}(a,\tilde{x}(a),\tilde{x}'(a))=0;
\end{equation*}
if $x(b)$ is not specified, then
\begin{equation*}
\sum_{i=1}^{n}H'_{i}(\mathcal{F}_{1}[\tilde{x}],\ldots,\mathcal{F}_{n}[\tilde{x}])f_{iv}(b,\tilde{x}(b),\tilde{x}'(b))=0.
\end{equation*}
\end{corollary}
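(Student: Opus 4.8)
The plan is to derive the corollary as the special case $\mathbb{T}=\mathbb{R}$ of Theorem~\ref{thm:mr}, so essentially no new analysis is required---only a translation of the time-scale objects into their classical counterparts. First I would recall that when $\mathbb{T}=\mathbb{R}$ every point is dense, so $\sigma(t)=\rho(t)=t$ for all $t$, the graininess functions vanish, and consequently $x^{\rho}(t)=x(t)$, $x^{\nabla}(t)=x'(t)$, $[a,b]_\kappa=[a,b]$, and the nabla integral $\int_a^b(\cdot)\nabla t$ reduces to the ordinary Riemann integral $\int_a^b(\cdot)\,dt$ (these are standard facts from \cite{BP,book:ts1}). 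In particular $\mathcal{F}_i[\tilde{x}]=\int_a^b f_i(t,\tilde{x}(t),\tilde{x}'(t))\,dt$, matching the statement.

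Next I would substitute these identifications into the Euler--Lagrange equation \eqref{Euler}. The term $f_{iv}^{\nabla}(t,\tilde{x}^{\rho}(t),\tilde{x}^{\nabla}(t))$ becomes the ordinary time-derivative $\frac{d}{dt}f_{iv}(t,\tilde{x}(t),\tilde{x}'(t))$, and $f_{iy}(t,\tilde{x}^{\rho}(t),\tilde{x}^{\nabla}(t))$ becomes $f_{iy}(t,\tilde{x}(t),\tilde{x}'(t))$, so \eqref{Euler} reads
\[
\sum_{i=1}^{n}H'_{i}(\mathcal{F}_{1}[\tilde{x}],\ldots,\mathcal{F}_{n}[\tilde{x}])\left(\frac{d}{dt}f_{iv}(t,\tilde{x}(t),\tilde{x}'(t))-f_{iy}(t,\tilde{x}(t),\tilde{x}'(t))\right)=0,
\]
which is, up to an overall sign of the summand (immaterial since the sum equals zero), exactly the Euler--Lagrange equation claimed in the corollary (the paper writes $\frac{d}{dx}$, evidently meaning $\frac{d}{dt}$). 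I would remark on this harmless sign/notation point so the reader is not confused.

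For the natural boundary conditions I would treat the two endpoints in turn. At $t=b$ the boundary condition in Theorem~\ref{thm:mr} is already stated in terms of $f_{iv}(b,\tilde{x}^{\rho}(b),\tilde{x}^{\nabla}(b))$, which under $\mathbb{T}=\mathbb{R}$ is simply $f_{iv}(b,\tilde{x}(b),\tilde{x}'(b))$, giving the desired transversality condition at $b$ immediately. At $t=a$ the condition \eqref{nat:l} involves $\int_a^{\sigma(a)}f_{iy}\,\nabla t$ and the term $f_{iv}(\sigma(a),\ldots)$; since $\sigma(a)=a$ the integral $\int_a^{a}f_{iy}\,\nabla t$ vanishes and $f_{iv}(\sigma(a),\tilde{x}^{\rho}(\sigma(a)),\tilde{x}^{\nabla}(\sigma(a)))$ collapses to $f_{iv}(a,\tilde{x}(a),\tilde{x}'(a))$, so \eqref{nat:l} reduces to $\sum_{i=1}^{n}H'_{i}(\cdots)f_{iv}(a,\tilde{x}(a),\tilde{x}'(a))=0$, as claimed. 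I do not anticipate a genuine obstacle here: the only thing to be slightly careful about is bookkeeping the sign in the Euler--Lagrange term and recording that $\sigma(a)=a$ annihilates the degenerate integral, so the whole argument is a short specialization rather than a proof with a hard step.
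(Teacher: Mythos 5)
Your proposal is correct and follows exactly the paper's own route: the authors obtain this corollary simply by ``choosing $\mathbb{T}=\mathbb{R}$ in Theorem~\ref{thm:mr}'', and your translation ($\sigma(t)=\rho(t)=t$, nabla derivative and integral reducing to the classical ones, the degenerate integral $\int_a^{\sigma(a)}$ vanishing) is precisely what that specialization amounts to. Your remarks on the overall sign of the Euler--Lagrange summand and on $\frac{d}{dx}$ versus $\frac{d}{dt}$ are accurate and harmless.
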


\begin{corollary}\label{cproduct}
If $\tilde{x}$ is a solution of the problem
\begin{equation*}
\begin{gathered}
\mathcal{L}[x]=\left(\int_{a}^{b}f_{1}(t,x^{\rho}(t),x^{\nabla}(t))\nabla
     t\right)\left( \int_{a}^{b}f_{2}(t,x^{\rho}(t),x^{\nabla}(t))\nabla
     t\right) \longrightarrow \textrm{extr}\\
     (x(a)=x_{a}) \quad (x(b)=x_{b})
\end{gathered}
\end{equation*}
then the Euler-Lagrange equation
\begin{multline*}
\mathcal{F}_{2}[\tilde{x}]\left(f_{1v}^{\nabla}(t,\tilde{x}^{\rho}(t),\tilde{x}^{\nabla}(t))-
f_{1y}(t,\tilde{x}^{\rho}(t),\tilde{x}^{\nabla}(t))\right)\\
+\mathcal{F}_{1}[\tilde{x}]\left(f_{2v}^{\nabla}(t,\tilde{x}^{\rho}(t),\tilde{x}^{\nabla}(t))-
f_{2y}(t,\tilde{x}^{\rho}(t),\tilde{x}^{\nabla}(t))\right)=0
\end{multline*}
holds for all $t \in [a,b]_\kappa$. Moreover, if $x(a)$ is not
specified, then
\begin{multline*}
\mathcal{F}_{2}[\tilde{x}]\left(\int_{a}^{\sigma
(a)}f_{1y}(t,\tilde{x}^{\rho}(t),\tilde{x}^{\nabla}(t))\nabla t
-f_{1v}(\sigma(a),\tilde{x}^{\rho}(\sigma
(a)),\tilde{x}^{\nabla}(\sigma(a)))\right)\\
+\mathcal{F}_{1}[\tilde{x}]\left(\int_{a}^{\sigma
(a)}f_{2y}(t,\tilde{x}^{\rho}(t),\tilde{x}^{\nabla}(t))\nabla t
-f_{2v}(\sigma(a),\tilde{x}^{\rho}(\sigma
(a)),\tilde{x}^{\nabla}(\sigma(a)))\right)=0;
\end{multline*}
if $x(b)$ is not specified, then
\begin{equation*}
\mathcal{F}_{2}[\tilde{x}]f_{1v}(b,\tilde{x}^{\rho}(b),\tilde{x}^{\nabla}(b))
+\mathcal{F}_{1}[\tilde{x}]f_{2v}(b,\tilde{x}^{\rho}(b),\tilde{x}^{\nabla}(b))=0.
\end{equation*}
\end{corollary}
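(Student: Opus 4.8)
The plan is to obtain Corollary~\ref{cproduct} as the special case $n=2$ of Theorem~\ref{thm:mr} applied to $H:\mathbb{R}^2\to\mathbb{R}$ given by $H(u_1,u_2)=u_1 u_2$. First I would verify that this choice of $H$ meets hypothesis (i) of Theorem~\ref{thm:mr}: the map $(u_1,u_2)\mapsto u_1 u_2$ is a polynomial, hence has continuous partial derivatives everywhere, with $H'_1(u_1,u_2)=u_2$ and $H'_2(u_1,u_2)=u_1$. Hypotheses (ii) and (iii) concern only the Lagrangians $f_1,f_2$, and these are precisely the assumptions already imposed in the statement of the corollary, so they carry over verbatim. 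Thus Theorem~\ref{thm:mr} applies to the functional $\mathcal{L}[x]=H(\mathcal{F}_1[x],\mathcal{F}_2[x])$, which is exactly the product functional in question.

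Next I would substitute. Evaluating the partial derivatives at $(\mathcal{F}_1[\tilde{x}],\mathcal{F}_2[\tilde{x}])$ gives $H'_1(\mathcal{F}_1[\tilde{x}],\mathcal{F}_2[\tilde{x}])=\mathcal{F}_2[\tilde{x}]$ and $H'_2(\mathcal{F}_1[\tilde{x}],\mathcal{F}_2[\tilde{x}])=\mathcal{F}_1[\tilde{x}]$. Inserting these two values into the Euler--Lagrange equation~\eqref{Euler} of Theorem~\ref{thm:mr} collapses the sum $\sum_{i=1}^{2}$ to the two displayed terms $\mathcal{F}_2[\tilde{x}]\bigl(f_{1v}^{\nabla}-f_{1y}\bigr)+\mathcal{F}_1[\tilde{x}]\bigl(f_{2v}^{\nabla}-f_{2y}\bigr)=0$, valid for all $t\in[a,b]_\kappa$, which is the asserted equation. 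Making the same substitution in the natural boundary condition at $a$ (the one involving $\int_a^{\sigma(a)}f_{iy}\,\nabla t$ and $f_{iv}(\sigma(a),\cdot,\cdot)$) and in the natural boundary condition at $b$ (the one involving $f_{iv}(b,\cdot,\cdot)$) yields the two displayed transversality conditions, in the cases where $x(a)$, respectively $x(b)$, is left free.

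I do not anticipate a genuine obstacle, as the argument is a direct specialization of Theorem~\ref{thm:mr}; the one point deserving a moment's care is purely a matter of bookkeeping, namely checking that the coefficients are paired correctly — it is $\mathcal{F}_2[\tilde{x}]$ (and not $\mathcal{F}_1[\tilde{x}]$) that multiplies the $f_1$-terms, and conversely — which is simply the content of $H'_1=u_2$, $H'_2=u_1$. In particular, no positivity or nonvanishing hypothesis on $\mathcal{F}_1[\tilde{x}]$ or $\mathcal{F}_2[\tilde{x}]$ is needed here, in contrast with the situation for the quotient treated in Corollary~\ref{cquotient}, where division by one of the $\mathcal{F}_i[\tilde{x}]$ forces such an assumption.
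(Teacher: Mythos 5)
Your proposal is correct and matches the paper's (implicit) argument exactly: the corollary is obtained from Theorem~\ref{thm:mr} by taking $n=2$ and $H(u_1,u_2)=u_1u_2$, so that $H'_1=u_2$ and $H'_2=u_1$ evaluate to $\mathcal{F}_2[\tilde{x}]$ and $\mathcal{F}_1[\tilde{x}]$ respectively. Your remark that no nonvanishing hypothesis on the $\mathcal{F}_i[\tilde{x}]$ is needed here, unlike for the quotient case, is also accurate.
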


\begin{remark}
In the particular case $\mathbb{T}=\mathbb{R}$,
Corollary~\ref{cproduct} gives a result of \cite{CLP}: the
Euler-Lagrange equation associated with the product functional
\begin{equation*}
\mathcal{L}[x]=\left(\int_{a}^{b}f_{1}(t,x(t),x'(t))dt\right)\left(
\int_{a}^{b}f_{2}(t,x(t),x'(t))dt\right)
\end{equation*}
is
\begin{multline*}
\mathcal{F}_{2}[x]\left(f_{1y}(t,x(t),x'(t))-
\frac{d}{dt}f_{1v}(t,x(t),x'(t))\right)\\
+\mathcal{F}_{1}[x]\left(f_{2y}(t,x(t),x'(t))-
\frac{d}{dt}f_{2v}(t,x(t),x'(t))\right)=0
\end{multline*}
and the natural condition at $t=a$, when $x(a)$ is free, becomes
\begin{equation*}
\mathcal{F}_{2}[x]f_{1v}(a,x(a),x'(a))+\mathcal{F}_{1}[x]f_{2v}(a,x(a),x'(a))=0.
\end{equation*}
\end{remark}

\begin{corollary}\label{cquotient}
If $\tilde{x}$ is a solution of the problem
\begin{equation*}
\begin{gathered}
\mathcal{L}[x]=\frac{\int_{a}^{b}f_{1}(t,x^{\rho}(t),x^{\nabla}(t))\nabla
     t}{\int_{a}^{b}f_{2}(t,x^{\rho}(t),x^{\nabla}(t))\nabla t} \longrightarrow \textrm{extr}\\
     (x(a)=x_{a}) \quad (x(b)=x_{b})
 \end{gathered}
\end{equation*}
then the Euler-Lagrange equation
\begin{multline*}
f_{1v}^{\nabla}(t,\tilde{x}^{\rho}(t),\tilde{x}^{\nabla}(t))-
f_{1y}(t,\tilde{x}^{\rho}(t),\tilde{x}^{\nabla}(t))\\
-Q\left(f_{2v}^{\nabla}(t,\tilde{x}^{\rho}(t),\tilde{x}^{\nabla}(t))-
f_{2y}(t,\tilde{x}^{\rho}(t),\tilde{x}^{\nabla}(t))\right)=0
\end{multline*}
holds for all $t \in [a,b]_\kappa$, where
$Q=\frac{\mathcal{F}_{1}[\tilde{x}]}{\mathcal{F}_{2}[\tilde{x}]}$.
Moreover, if $x(a)$ is not specified, then
\begin{multline*}
\int_{a}^{\sigma
(a)}f_{1y}(t,\tilde{x}^{\rho}(t),\tilde{x}^{\nabla}(t))\nabla t
-f_{1v}\left(\sigma(a),\tilde{x}^{\rho}(\sigma
(a)),\tilde{x}^{\nabla}(\sigma(a))\right)\\
-Q\left(\int_{a}^{\sigma
(a)}f_{2y}(t,\tilde{x}^{\rho}(t),\tilde{x}^{\nabla}(t))\nabla t
-f_{2v}\left(\sigma(a),\tilde{x}^{\rho}(\sigma
(a)),\tilde{x}^{\nabla}(\sigma(a))\right)\right)=0;
\end{multline*}
if $x(b)$ is not specified, then
\begin{equation*}
f_{1v}(b,\tilde{x}^{\rho}(b),\tilde{x}^{\nabla}(b))-Qf_{2v}(b,\tilde{x}^{\rho}(b),\tilde{x}^{\nabla}(b))=0.
\end{equation*}
\end{corollary}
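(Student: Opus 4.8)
The plan is to recognize the quotient functional as the instance of the general problem \eqref{vp} obtained by taking $n=2$ and $H(u_1,u_2)=u_1/u_2$, and then to read off the conclusions of Theorem~\ref{thm:mr} after computing the partial derivatives of this particular $H$. First I would check that the hypotheses of Theorem~\ref{thm:mr} hold along $\tilde{x}$: assumptions (ii) and (iii) on $f_1,f_2$ are inherited verbatim from the statement of the corollary, while assumption (i) requires $H$ to have continuous partial derivatives near the point $(\mathcal{F}_1[\tilde{x}],\mathcal{F}_2[\tilde{x}])$, which holds under the tacit nondegeneracy condition $\mathcal{F}_2[\tilde{x}]\neq 0$ that makes the quotient problem meaningful. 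On the open set $\{u_2\neq 0\}$ one has $H'_1(u_1,u_2)=1/u_2$ and $H'_2(u_1,u_2)=-u_1/u_2^2$, so that $H'_1(\mathcal{F}_1[\tilde{x}],\mathcal{F}_2[\tilde{x}])=1/\mathcal{F}_2[\tilde{x}]$ and $H'_2(\mathcal{F}_1[\tilde{x}],\mathcal{F}_2[\tilde{x}])=-\mathcal{F}_1[\tilde{x}]/(\mathcal{F}_2[\tilde{x}])^2$.

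Next I would substitute these two values into the Euler-Lagrange equation \eqref{Euler}. Writing $f_{iv}$, $f_{iy}$ for the quantities evaluated at $(t,\tilde{x}^{\rho}(t),\tilde{x}^{\nabla}(t))$, the sum over $i=1,2$ collapses to
\[
\frac{1}{\mathcal{F}_{2}[\tilde{x}]}\left(f_{1v}^{\nabla}-f_{1y}\right)
-\frac{\mathcal{F}_{1}[\tilde{x}]}{\left(\mathcal{F}_{2}[\tilde{x}]\right)^{2}}\left(f_{2v}^{\nabla}-f_{2y}\right)=0 .
\]
Multiplying through by $\mathcal{F}_{2}[\tilde{x}]\neq 0$ and setting $Q=\mathcal{F}_{1}[\tilde{x}]/\mathcal{F}_{2}[\tilde{x}]$ yields exactly the asserted Euler-Lagrange equation, valid for all $t\in[a,b]_\kappa$. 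The two natural boundary conditions are obtained the same way: substituting the same values of $H'_1$ and $H'_2$ into \eqref{nat:l} and into the transversality condition at $b$ from Theorem~\ref{thm:mr}, and again multiplying by $\mathcal{F}_{2}[\tilde{x}]$, produces the displayed condition at $\sigma(a)$ when $x(a)$ is free and the displayed condition at $b$ when $x(b)$ is free.

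I do not expect a genuine obstacle here: once Theorem~\ref{thm:mr} is available the corollary is essentially a one-line specialization, and the only point requiring attention is the standing hypothesis $\mathcal{F}_{2}[\tilde{x}]\neq 0$, which is what guarantees that $H$ is differentiable at the relevant point and what legitimizes clearing the denominator in the final step.
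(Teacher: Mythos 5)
Your proposal is correct and matches the paper's (implicit) derivation exactly: the corollary is stated without proof precisely because it is the specialization of Theorem~\ref{thm:mr} to $n=2$ and $H(u_1,u_2)=u_1/u_2$, with $H'_1=1/\mathcal{F}_2[\tilde{x}]$ and $H'_2=-\mathcal{F}_1[\tilde{x}]/(\mathcal{F}_2[\tilde{x}])^2$ substituted into \eqref{Euler}, \eqref{nat:l}, and the condition at $b$, followed by clearing the factor $\mathcal{F}_2[\tilde{x}]\neq 0$. Your explicit attention to the nondegeneracy condition $\mathcal{F}_2[\tilde{x}]\neq 0$, which the paper leaves tacit, is a welcome addition.
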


\begin{remark}
In the particular situation $\mathbb{T}=\mathbb{R}$,
Corollary~\ref{cquotient} gives the following result of \cite{CLP}:
the Euler-Lagrange equation associated with the quotient functional
\begin{equation*}
\mathcal{L}[x]=\frac{\int_{a}^{b}f_{1}(t,x(t),x'(t))dt}{\int_{a}^{b}f_{2}(t,x(t),x'(t))dt}
\end{equation*}
is
\begin{multline*}
f_{1y}(t,x(t),x'(t))-
Qf_{2y}(t,x(t),x'(t))\\
-\frac{d}{dt}\left[(f_{1v}(t,x(t),x'(t))-
Qf_{2v}(t,x(t),x'(t))\right]=0
\end{multline*}
and the natural condition at $t=a$, when $x(a)$ is free, becomes
\begin{equation*}
f_{1v}(a,x(a),x'(a))- Qf_{2v}(a,x(a),x'(a))=0.
\end{equation*}
\end{remark}

%--------------------------------------------------------------------

\section{Examples}
\label{sec:ex}

 \begin{example}
Consider the problem
\begin{equation}\label{ex:product}
\begin{gathered}
\mathcal{L}[x]=\left(\int_{-1}^{0}(x^{\nabla}(t))^2\nabla
     t\right)\left(\int_{-1}^{0}tx^{\nabla}(t) \nabla
     t\right)\longrightarrow \min\\
     x(-1)=1, \quad x(0)=0.
 \end{gathered}
\end{equation}
If $\tilde{x}$ is a local minimum of \eqref{ex:product}, then the
Euler-Lagrange equation must hold, i.e.,
\begin{equation}\label{ex:product:euler}
2\tilde{x}^{\nabla\nabla}(t)Q_{2}+Q_{1}=0,
\end{equation}
where
\begin{equation*}
Q_{1}=\mathcal{F}_{1}[\tilde{x}]=\int_{-1}^{0}(\tilde{x}^{\nabla}(t))^2\nabla
t, \quad
Q_{2}=\mathcal{F}_{2}[\tilde{x}]=\int_{-1}^{0}t\tilde{x}^{\nabla}(t)
\nabla t.
\end{equation*}
If  $Q_{2}= 0$, then also $Q_{1}=0$. This contradicts the fact that
on any time scale a global minimizer for the problem
\begin{equation*}
\begin{gathered}
\mathcal{F}_{1}[x]=\int_{-1}^{0}(x^{\nabla}(t))^2\nabla t \longrightarrow \min\\
     x(-1)=1, \quad x(0)=0
\end{gathered}
\end{equation*}
is $\bar{x}(t)=-t$ and $\mathcal{F}_{1}[\bar{x}]=1$. Hence,
$Q_{2}\neq 0$ and equation \eqref{ex:product:euler} implies that
the extremals for problem \eqref{ex:product} are those
satisfying the delta differential equation
\begin{equation}\label{euler}
\tilde{x}^{\nabla\nabla}(t)=-\frac{Q_1}{2Q_2}
\end{equation}
subject to boundary conditions $x(-1)=1$ and $x(0)=0$. A solution of
\eqref{euler} depends on the time scale. Let us solve, for example,
this equation on $\mathbb{T}=\mathbb{R}$ and on
$\mathbb{T}=\left\{-1,-\frac{1}{2},0\right\}$. On
$\mathbb{T}=\mathbb{R}$ we obtain
\begin{equation}\label{sol:P}
x(t)=-\frac{Q_1}{4Q_2}t^2-\frac{4Q_2+Q_1}{4Q_2}t.
\end{equation}
Substituting \eqref{sol:P} into functionals $\mathcal{F}_1$ and
$\mathcal{F}_2$ gives
\begin{equation}\label{equation:Q1,Q2}
\begin{cases}
\frac{48Q_2^2+Q_1^2}{48Q_2^2}=Q_1\\
\frac{12Q_2-Q_1}{24Q_2}=Q_2.
\end{cases}
\end{equation}
Solving the system of equations \eqref{equation:Q1,Q2} we obtain
\begin{gather*}
\begin{cases}
Q_1=0\\
Q_2=0,
\end{cases}
\quad
\begin{cases}
Q_1=\frac{4}{3}\\
Q_2=\frac{1}{3}.
\end{cases}
\end{gather*}
Therefore,
\begin{equation*}
\tilde{x}(t)=-t^2-2t
\end{equation*}
is an extremal for problem \eqref{ex:product} on
$\mathbb{T}=\mathbb{R}$.

The solution of \eqref{euler} on
$\mathbb{T}=\left\{-1,-\frac{1}{2},0\right\}$ is
\begin{gather}\label{euler:T}
x(t)=
\begin{cases}
1 & \text{ if } t=-1\\
\frac{1}{2}+\frac{Q_1}{16Q_2} & \text{ if } t=-\frac{1}{2}\\
0 & \text{ if } t=0 .
\end{cases}
\end{gather}
Constants $Q_1$ and $Q_2$ are determined by substituting
\eqref{euler:T} into functionals $\mathcal{F}_1$ and
$\mathcal{F}_2$. The resulting
 system of equations is
\begin{equation}\label{equation:T}
    \begin{cases}
1+\frac{Q_1^2}{64Q_2^2}=Q_1\\
\frac{1}{4}-\frac{Q_1}{32Q_2}=Q_2.
\end{cases}
\end{equation}
Since system of equations \eqref{equation:T} has no real solutions,
we conclude that there exists no extremizer for problem
\eqref{ex:product} on $\mathbb{T}=\left\{-1,-\frac{1}{2},0\right\}$
among the set of functions that we consider to be admissible.
\end{example}

\begin{example}\label{ex:q:1}
Consider now the problem
\begin{equation}
\label{ex:quotient:1}
\begin{gathered}
\mathcal{L}[x]=\frac{\int_{-2}^{0}(x^{\nabla}(t))^2 \nabla
     t}{\int_{-2}^{0}(x^{\nabla}(t)+(x^{\nabla}(t))^2)\nabla
     t} \longrightarrow \min \\
     x(-2)=4, \quad x(0)=0.
\end{gathered}
\end{equation}
If $\tilde{x}$ is a local minimizer for \eqref{ex:quotient:1}, then
the Euler-Lagrange equation must hold, i.e.,
\begin{equation*}
0=[2\tilde{x}^{\nabla}(t)-Q(1+2\tilde{x}^{\nabla}(t))]^\nabla, \quad
t\in[-2,0]_{\kappa},
\end{equation*}
where
\begin{equation*}
Q=\frac{\int_{-2}^{0}(\tilde{x}^{\nabla}(t))^2 \nabla
     t}{\int_{-2}^{0}(\tilde{x}^{\nabla}(t)+(\tilde{x}^{\nabla}(t))^2)\nabla
     t}.
\end{equation*}
Therefore,
\begin{equation*}
0=2\tilde{x}^{\nabla\nabla}(t)-Q2\tilde{x}^{\nabla\nabla}(t), \quad
t\in[-2,0]{\kappa}.
\end{equation*}
As $x(-2)=4$ and $x(0)=0$, we have  $Q\neq 1$. Thus
$\tilde{x}^{\nabla\nabla}(t)=0$. The solution of the delta
differential equation
$x^{\nabla\nabla}(t)=0$, $x(-2)=4$, $x(0)=0$,
does not depend on the time scale: $\tilde{x}(t)=-2t$
is an extremal for problem \eqref{ex:quotient:1}.
\end{example}

%--------------------------------------------------

\section*{Acknowledgments}

This work was partially supported by the
\emph{Portuguese Foundation for Science and Technology} (FCT)
through the \emph{Center for Research and Development
in Mathematics and Applications} (CIDMA)
of the University of Aveiro,
cofinanced by the European Community Fund FEDER/POCI 2010.

%--------------------------------------------------

%--------------------------------------------------

\end{document}